\newtheorem{theorem}{Theorem}[section]
\newtheorem{lemma}[theorem]{Lemma}
\newtheorem{proposition}[theorem]{Proposition}
\newtheorem{remark}[theorem]{Remark}
\newcommand{\Z}{\mathbb{Z}}
\newcommand{\id}{\operatorname{id}}
\newcommand{\aut}{\operatorname{Aut}}
\newcommand{\Aut}{\operatorname{Aut}}
\newenvironment{proof}{\par\noindent{\bf Proof.}}{$\qed$\par\bigskip}
\newcommand{\qed}{\enspace\vrule  height6pt  width4pt  depth2pt}
\begin{document}
\title{Every finite abelian group is a subgroup of the additive group of a
finite simple left brace\thanks{The first author was partially
supported by the grants MINECO-FEDER  MTM2017-83487-P and AGAUR
2017SGR1725 (Spain). The second author is supported in part by
Onderzoeksraad of Vrije Universiteit Brussel and Fonds voor
Wetenschappelijk Onderzoek (Belgium). The third author is supported
by the National Science Centre  grant. 2016/23/B/ST1/01045 (Poland).
2010 MSC: Primary 16T25, 20F16, 20E22. Keywords: Yang-Baxter
equation, set-theoretic solution, brace, simple, A-group.} }
\author{F. Ced\'o \and E. Jespers \and J. Okni\'{n}ski}
\date{}

\maketitle

\begin{abstract}
Left braces, introduced by Rump, have turned out to provide an
important tool in the study of set theoretic solutions of the
quantum Yang-Baxter equation. In particular, they have allowed to
construct several new families of solutions.  A left brace
$(B,+,\cdot )$ is a structure determined by two group structures on
a set $B$: an abelian group $(B,+)$ and a group $(B,\cdot)$,
satisfying certain compatibility conditions. The main result of this
paper shows that every finite abelian group $A$ is a subgroup of the
additive group of a finite simple left brace $B$ with metabelian
multiplicative group with abelian Sylow subgroups. This result
complements earlier unexpected results of the authors on an
abundance of finite simple left braces.
\end{abstract}

\section{Introduction}

In order to investigate a question posed by Drinfeld
\cite{Drinfeld}, Rump, in  \cite{R07}, introduced a new algebraic
structure, called a left brace, to study non-degenerate involutive
set-theoretic solutions of the Yang--Baxter equation. Using an
equivalent formulation, given in \cite{CJOComm}, a left brace is a
set $B$ equipped with two operations $+$ and $\cdot$ such that
$(B,+)$ is an abelian group, $(B,\cdot )$ is a group and $a\cdot
(b+c)+a=a\cdot b + a\cdot c$, for all $a,b,c\in B$. In \cite{BCJAll}
it was shown that all non-degenerate involutive solutions on a
finite set can be explicitly determined from finite left braces.
Hence the  study of left braces becomes essential to classify all
solutions. Moreover, intriguingly, braces have shown up in several
areas of mathematics (see for example the surveys
\cite{CedoSurvey,RumpSurvey}). One of the fundamental problems is to
classify the building blocks of all finite simple left braces, that
is describe all finite simple left braces. It is well-known that the
additive Sylow $p$-subgroup $B_p$  of a finite brace $B$ is a left
subbrace of  $B$ and, in \cite{BCJO18}, it has been shown that $B$
is an iterated matched product of all $B_p$. Rump, in \cite{R07},
has shown that if $B=B_p$, then $B$ is a simple left brace precisely
when $B$ has order $p$; more generally, the conclusion holds if
$(B,\cdot )$ is a finite nilpotent group. In particular, such a
brace is trivial, i.e. the two operations $+$ and $\cdot$ coincide.
Recall that Etingof, Schedler and Soloviev \cite{ESS} have shown
that if $B$ is finite, then $(B,\cdot )$ is a solvable group. Also
if $B_p\neq B$, there are some natural constraints on the order of a
finite simple left brace $B$. For example, Smoktunowicz, in
\cite{SmokRestraints}, showed that if $|B|=p^nq^m$ (with $p$ and $q$
different prime numbers and $n$, $m$ positive integers), then
$p|(q^t - 1)$ and $q|(p^s - 1)$ for some $0 < t \leq  m$ and $0 < s
\leq  n$. However,   these conditions are not sufficient for
simplicity of $B$. For example, there is no simple left brace of
order $p^n q$, where $p$ and $q$ are distinct primes and $n$ is the
multiplicative order of $p$ in the unit group of $\Z/(q)$ (see
\cite[Remark~5.3]{BCJO18}). Bachiller, in \cite[Theorem 6.3 and
Section 7]{B18}, produced the first example of a non-trivial finite
simple left brace. This initiated a program of constructing and
describing finite simple left braces \cite{BCJO18,BCJO19}. One of
the structural approaches is via the matched product of the Sylow
subgroups; this allowed to construct new classes of examples and
provided some necessary conditions for simplicity. In \cite{CJO}, it
has been proven that there is an abundance of finite simple left
braces, indeed, for any positive integer $n>1$ and distinct prime
numbers  $p_1$, $p_2, \ldots , p_n$, there exist positive integers
$l_1,l_2,\ldots ,l_n$, such that, for each $n$-tuple of integers
$m_1 \geq l_1, m_2 \geq l_2,\dots ,m_n \geq l_n$, there exists a
simple left brace of order $p^{m_1} p^{m_2} \cdots  p^{m_n}$ that
has a metabelian multiplicative group with abelian Sylow subgroups.
The construction of these simple braces is via asymmetric products,
as introduced by Catino,  Colazzo and Stefanelli in \cite{CCS}. This
not only provided  constructions of new classes of simple left
braces, but also all previously known constructions have been
interpreted as asymmetric products. Furthermore, in \cite{BCJO19}, a
construction is given of finite simple left braces with a
multiplicative group $(B,\cdot )$ that is solvable of arbitrary
derived length. In this paper, we focus on the additive  group
$(B,+)$ and we discover new examples of finite simple left braces.
Our main result reads as follows:

{\it  For every finite abelian group $A$ there exists a finite
simple left brace $B$ such that $A$ is a subgroup of the additive
group of $B$.}

In fact, the simple left braces $B$ that we construct have
metabelian multiplicative groups $(B, \cdot)$ and have  abelian Sylow
subgroups (so called  A-groups \cite{Taunt}), and additive group isomorphic to
$$\prod_{i\in \Z/(m)}(\Z/(p_i^{n_i}))^{2s_il_{i-1}+1},$$
where $m>1$, $p_1,\dots p_m$ are any distinct prime numbers,
$n_1,\dots, n_m, s_1,\dots ,s_m$ are any positive integers, and
$l_i=p_i^{n_i}-p_i^{n_i-1}$. In particular, we thus also construct
examples that have elements of additive order $2^n$ (for any $n>1$).
None of the known previous constructions included such elements.

\section{Preliminaries}

A left brace is a set $B$ with two binary operations, $+$ and
$\cdot$, such that $(B,+)$ is an abelian group, $(B,\cdot)$ is a
group, and for every $a,b,c\in B$,
$$ a\cdot (b+c)+a=a\cdot b+a\cdot c.$$
In any left brace $B$ there is an action $\lambda\colon
(B,\cdot)\rightarrow \aut(B,+)$,  called the lambda map of $B$,
defined by $\lambda(a)=\lambda_a$ and $\lambda_{a}(b)=a\cdot b-a$,
for $a,b\in B$. A trivial brace is a left brace $B$ such that
$ab=a+b$, for all $a,b\in B$, i.e. all $\lambda_a=\id$.

A left ideal of a left brace $B$ is a subgroup $L$ of the additive
group of $B$ such that $\lambda_a(b)\in L$, for all $b\in L$ and all
$a\in B$. An ideal of a left brace $B$ is a normal subgroup $I$ of
the multiplicative group of $B$ such that $\lambda_a(b)\in I$, for
all $b\in I$ and all $a\in B$. Note that
$ab^{-1}=a-\lambda_{ab^{-1}}(b)$ and
$a-b=a\lambda_{a^{-1}b}(b^{-1})$, for all $a,b\in B$. Hence, every
left ideal $L$ of $B$ also is a subgroup of the multiplicative group
of $B$, and every  ideal $I$ of a left brace $B$ also is a subgroup
of the additive group of $B$, and then $B/I$ is a left brace, the
quotient brace $B$ modulo $I$. A non-zero left brace $B$ is simple
if $\{ 0\}$ and $B$ are the only ideals of $B$.

The following result is useful to prove simplicity in many cases.

\begin{lemma}\label{ideal} (Lemma 2.5 in \cite{BCJO18}) If $I$ is an ideal of a
left brace $B$, then $(\lambda_b-\id)(a)\in I$, for all $a\in B$ and
$b\in I$.
\end{lemma}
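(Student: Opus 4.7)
The plan is to rewrite $(\lambda_b-\id)(a)=\lambda_b(a)-a=b\cdot a - b - a$ in a form where each summand manifestly lies in $I$. The obstacle is that the left ideal condition directly controls $\lambda_a(b)=a\cdot b-a$ (with $b\in I$), not $\lambda_b(a)$ with $b\in I$; so the arguments of $\lambda$ must be swapped before the hypotheses can be applied.

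The trick I would use is to push $b$ past $a$ on the multiplicative side via normality. Set $c=a^{-1}\cdot b\cdot a$. Since $I$ is a normal subgroup of $(B,\cdot)$, we have $c\in I$, and by construction $b\cdot a=a\cdot c$. Now invoking the brace identity in the form $a\cdot c=a+\lambda_a(c)$, I obtain
$$\lambda_b(a)=b\cdot a-b=(a+\lambda_a(c))-b,$$
so that
$$(\lambda_b-\id)(a)=\lambda_a(c)-b.$$

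At this point the two remaining summands lie in $I$ for independent reasons: $\lambda_a(c)\in I$ because $c\in I$ and $I$ is $\lambda$-stable (the second defining property of an ideal), while $b\in I$ by hypothesis. The excerpt already records that every ideal $I$ is in particular a subgroup of $(B,+)$, so $\lambda_a(c)-b\in I$, which concludes the proof. The whole argument is essentially a two-line manipulation once one notices the conjugation substitution $c=a^{-1}ba$; the only real content is realising that normality in $(B,\cdot)$ together with the brace compatibility converts a statement about $\lambda_b(a)$ into a statement about $\lambda_a(c)$, which is precisely the form the ideal axioms handle.
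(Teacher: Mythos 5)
Your proof is correct: the substitution $c=a^{-1}\cdot b\cdot a$ (valid since $I$ is normal in $(B,\cdot)$) together with $a\cdot c=a+\lambda_a(c)$ gives $(\lambda_b-\id)(a)=\lambda_a(c)-b$, and both terms lie in $I$ by the ideal axioms and the fact that $I$ is an additive subgroup. The paper only cites this as Lemma~2.5 of \cite{BCJO18} without reproducing a proof, and your argument is precisely the standard one given there.
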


In \cite{CCS} Catino, Colazzo and Stefanelli introduced the
asymmetric product of two left braces. Let $S$ and $T$ be two
(additive) abelian groups. Recall that a (normalized) symmetric
$2$-cocycle on $T$ with values in $S$ is a map $b\colon T\times
T\rightarrow S$ such that
\begin{itemize}
\item[(i)] $b(0,0)=0$;
\item[(ii)] $b(t_1,t_2)=b(t_2,t_1)$;
\item[(iii)] $b(t_1+t_2,t_3)+b(t_1,t_2)=b(t_1,t_2+t_3)+b(t_2,t_3)$,
\end{itemize}
for all $t_1,t_2,t_3\in T$.

\begin{theorem} (Catino, Colazzo, Stefanelli \cite{CCS})
Let $T$ and $S$ be two left braces. Let $b \colon T\times T
\longrightarrow S$ be a symmetric $2$-cocycle on $(T, +)$ with
values in $(S, +)$, and let $\alpha \colon (S,\cdot)\longrightarrow
\aut(T,+,\cdot)$ be a homomorphism of groups such that
$$s\cdot b(t_2, t_3) + b(t_1\cdot \alpha_s(t_2 +
t_3),t_1)= b(t_1\cdot\alpha_s(t_2), t_1\cdot\alpha_s(t_3))+ s,$$
where $\alpha_s=\alpha(s)$, for all $s\in S$ and $t_1, t_2, t_3\in
T$. Then the addition and multiplication on $T\times S$ given by
$$
(t_1,s_1)+(t_2,s_2)=(t_1+t_2,~s_1+s_2+b(t_1,t_2)),
$$
$$
(t_1,s_1)\cdot (t_2,s_2)=(t_1\cdot \alpha_{s_1}(t_2),~s_1\cdot s_2),
$$
define a structure of left brace on $T\times S$, called the
asymmetric product of $T$ by $S$ (via $b$ and $\alpha$). It is
denoted as  $T\rtimes_\circ S$.
\end{theorem}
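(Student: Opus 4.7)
The plan is to verify directly the three axioms of a left brace for the set $T\times S$ equipped with the proposed operations: that $(T\times S,+)$ is an abelian group, that $(T\times S,\cdot)$ is a group, and that the left distributivity $a\cdot(b+c)+a=a\cdot b+a\cdot c$ holds. The first two are essentially standard (abelian extension by a symmetric $2$-cocycle and a semidirect-product construction), so I expect the substance of the argument to be concentrated in the brace identity, and in particular in the second coordinate, which is where the compatibility hypothesis will enter.

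For the additive part, I would first deduce from the normalization $b(0,0)=0$ together with the cocycle identity (iii) that $b(t,0)=b(0,t)=0$ for every $t\in T$, so that $(0,0)$ is the additive identity. Associativity of $+$ on $T\times S$ is the cocycle identity (iii) rewritten, commutativity is the symmetry (ii), and one checks that the inverse of $(t,s)$ is $(-t,-s-b(t,-t))$. For the multiplicative part, the second coordinate is just the group $(S,\cdot)$, while the first coordinate is a twist of $(T,\cdot)$ by the action $\alpha$; since $\alpha$ lands in $\aut(T,+,\cdot)$ and is a group homomorphism, this is a standard semidirect-product construction with identity $(0,0)$ and explicit inverses.

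The heart of the proof is the brace identity. Setting $a=(t_1,s_1)$, $b=(t_2,s_2)$, $c=(t_3,s_3)$ and expanding both sides using the definitions, the first coordinates reduce, after using that $\alpha_{s_1}$ is an additive automorphism of $T$, to
$$t_1\cdot(\alpha_{s_1}(t_2)+\alpha_{s_1}(t_3))+t_1=t_1\cdot\alpha_{s_1}(t_2)+t_1\cdot\alpha_{s_1}(t_3),$$
which is the brace identity inside $T$. For the second coordinates, I would use the brace identity in $S$ twice to expand $s_1\cdot(s_2+s_3+b(t_2,t_3))$ as $s_1\cdot s_2+s_1\cdot s_3+s_1\cdot b(t_2,t_3)-2s_1$, then add the $b(t_1\cdot\alpha_{s_1}(t_2+t_3),t_1)+s_1$ coming from the outer $+a$, and compare with $s_1\cdot s_2+s_1\cdot s_3+b(t_1\cdot\alpha_{s_1}(t_2),t_1\cdot\alpha_{s_1}(t_3))$. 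After cancellation what remains is precisely the assumed compatibility equation
$$s_1\cdot b(t_2,t_3)+b(t_1\cdot\alpha_{s_1}(t_2+t_3),t_1)=b(t_1\cdot\alpha_{s_1}(t_2),t_1\cdot\alpha_{s_1}(t_3))+s_1.$$

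The delicate step is this last computation: the brace identity in $S$ only directly handles binary sums, so expanding a ternary sum produces extra $-s_1$ terms that must be tracked carefully, and the $+a$ summand on the left side of the brace identity is exactly what supplies the single $+s_1$ needed to balance the compatibility condition. I would present this verification as a short, explicit coordinate-wise calculation rather than try to package it abstractly, since the calculation is in fact what motivates the precise form of the compatibility condition stated in the hypothesis.
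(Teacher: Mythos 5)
Your verification is correct; the paper itself gives no proof of this theorem, citing it directly from Catino--Colazzo--Stefanelli, and your coordinate-wise check --- with the compatibility hypothesis emerging exactly as the residue of the second-coordinate computation once the ternary expansion of $s_1\cdot(s_2+s_3+b(t_2,t_3))$ produces the $-2s_1$ that the outer $+s_1$ partially cancels --- is the standard argument. The only step you leave implicit is that in any left brace the additive and multiplicative identities coincide (from $a\cdot(0+0)+a=a\cdot 0+a\cdot 0$), which is what makes $(0,0)$ the identity of $(T\times S,\cdot)$ and gives $\alpha_0=\id$ in your semidirect-product verification.
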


Note that, in $T\rtimes_\circ S$,
$$
(t_1,s_1)-(t_2,s_2)=(t_1-t_2,~s_1-s_2-b(t_1-t_2,t_2)),
$$
for all $t_1,t_2\in T$ and $s_1,s_2\in S$. Thus the lambda map of
$T\rtimes_\circ S$ is defined by
\begin{equation} \label{lambda} \lambda_{(t_1,s_1)}(t_2,s_2)= \left(
\lambda_{t_1}\alpha_{s_1}(t_2),~
\lambda_{s_1}(s_2)-b(\lambda_{t_1}\alpha_{s_1}(t_2),t_1)
\right).\end{equation}

Note that every symmetric bi-additive map $b\colon T\times
T\longrightarrow S$ is a symmetric $2$-cocycle on $(T, +)$ with
values in $(S, +)$. In this case, the condition on $b$ and  $\alpha$
is equivalent to the following two conditions:
$$\lambda_s(b(t_2,t_3))=b(\alpha_s(t_2),\alpha_s(t_3)),$$
$$b(t_2,t_3)=b(\lambda_{t_1}(t_2),\lambda_{t_1}(t_3)).$$
\bigskip

In the construction below $T$ and $S$ will be trivial braces. Hence,
these two conditions are reduced to
\begin{equation}\label{condition}
b(t_2,t_3)=b(\alpha_s(t_2),\alpha_s(t_3)).
\end{equation}
Moreover, in this case
$$\lambda_{(t_1,s_1)}(t_2,s_2)=
\left( \alpha_{s_1}(t_2),~ s_2-b(\alpha_{s_1}(t_2),t_1) \right).$$

Let $A,B$ be abelian additive groups. Let $b\colon A\times
A\longrightarrow B$ be a symmetric bi-additive map. We define an
addition on $A\times B$ by
$$(x_1,y_1)+(x_2,y_2)=(x_1+x_2,y_1+y_2+b(x_1,x_2)),
$$
for all $x_1,x_2\in A$ and $y_1,y_2\in B$. We denote by $G_b(A,B)$
the abelian group $A\times B$ with the above addition. Note that if
$b$ is the zero map $0$, then $G_0(A,B)$ is just the direct product
of the groups $A$ and $B$.

\begin{lemma}\label{additive}
If $B$ is finite of odd order, then $G_b(A,B)\cong G_0(A,B)$ for
every symmetric bi-additive map $b\colon A\times A\longrightarrow
B$.
\end{lemma}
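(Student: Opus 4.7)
The plan is to exhibit an explicit isomorphism $\phi\colon G_b(A,B)\to G_0(A,B)$ of the form
$$\phi(x,y)=(x,\ y+f(x)),$$
where $f\colon A\to B$ is a suitable function to be determined. Since $\phi$ is manifestly a bijection of the underlying set $A\times B$, the only issue is to force $\phi$ to be a homomorphism from the twisted group $G_b(A,B)$ to the direct product $G_0(A,B)$.

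Carrying out the computation, $\phi((x_1,y_1)+(x_2,y_2))$ in $G_b(A,B)$ equals $(x_1+x_2,\ y_1+y_2+b(x_1,x_2)+f(x_1+x_2))$, while $\phi(x_1,y_1)+\phi(x_2,y_2)$ in $G_0(A,B)$ equals $(x_1+x_2,\ y_1+y_2+f(x_1)+f(x_2))$. So $\phi$ is a homomorphism if and only if
$$f(x_1+x_2)-f(x_1)-f(x_2)=-b(x_1,x_2)\qquad\text{for all }x_1,x_2\in A.$$
In other words, I need $b$ to be the coboundary of the $1$-cochain $-f$.

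The hypothesis that $|B|$ is odd is exactly what makes this possible: the endomorphism $y\mapsto 2y$ of $B$ is then invertible, and I can set
$$f(x):=-\tfrac{1}{2}\,b(x,x),$$
where $\tfrac{1}{2}$ denotes the inverse of doubling on $B$. Using symmetry and bi-additivity of $b$, one has $b(x_1+x_2,x_1+x_2)=b(x_1,x_1)+2b(x_1,x_2)+b(x_2,x_2)$, so
$$2\bigl(f(x_1+x_2)-f(x_1)-f(x_2)\bigr)=-2b(x_1,x_2),$$
and cancelling the (now invertible) factor $2$ yields the required identity.

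There is no real obstacle; the only point that uses the hypothesis is the invertibility of $2$ on $B$, which is precisely the halving trick that fails when $|B|$ is even (where $b(x,x)$ need not lie in $2B$ and $b$ need not be a coboundary). Thus the map $\phi(x,y)=(x,\,y-\tfrac{1}{2}b(x,x))$ is a well-defined group isomorphism $G_b(A,B)\cong G_0(A,B)$.
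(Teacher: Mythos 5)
Your proposal is correct and is essentially the paper's own argument: the paper writes $|B|=2n+1$ and uses $\varphi_b(x,y)=(x,y+nb(x,x))$, and since $2n\equiv -1$ on $B$ this $n$ is exactly your $-\tfrac12$, so the two maps coincide. The verification is the same coboundary computation in both cases.
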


\begin{proof}
Let $|B|=2n+1$. We define $\varphi_b\colon G_b(A,B)\longrightarrow
G_0(A,B)$ by $\varphi_b(x,y)=(x,y+nb(x,x))$, for all $x\in A$ and
$y\in B$. It is clear that $\varphi$ is bijective. For $x_1,x_2\in
A$ and $y_1,y_2\in B$, we have that
\begin{align*}
\varphi_b((x_1,y_1)+(x_2,y_2))=&\varphi_b(x_1+x_2,y_1+y_2+b(x_1,x_2))\\
=&(x_1+x_2,y_1+y_2+b(x_1,x_2)+nb(x_1+x_2,x_1+x_2))\\
=&(x_1+x_2,y_1+y_2+b(x_1,x_2)+nb(x_1,x_1)+nb(x_2,x_2)+2nb(x_1,x_2))\\
=&(x_1+x_2,y_1+y_2+nb(x_1,x_1)+nb(x_2,x_2))\\
=&\varphi_b(x_1,y_1)+\varphi_b(x_2,y_2).
\end{align*}
Thus the result follows.
\end{proof}

The result is not true for $B$ of even order. For example, let
$A=B=\Z/(2)$ and let $b\colon A\times A\longrightarrow B$ be the
map defined by $b(x,y)=xy$. Note that in $G_b(A,B)$ we have that
$(1,0)+(1,0)=(0,1)$. Then $G_b(A,B)\cong\Z/(4)\not\cong
\Z/(2)\times \Z/(2)=G_0(A,B)$.

\section{The construction}

Let $m>1$ be an integer. For $i\in \Z/(m)$, let $p_{i}$ be a prime
number and let $n_i$ and $s_i$ be positive integers. We assume that
$p_i\neq p_j$ for $i\neq j$. Consider the rings
$R_i=\mathbb{Z}/(p_i^{n_i})$ and the polynomials
$$q_i(x)=\sum_{k=0}^{p_i-1}x^{kp_i^{n_i-1}}\in R_{i+1}[x],$$
for $i\in \Z/(m)$. Note that
$(x^{p_i^{n_i-1}}-1)q_i(x)=x^{p_i^{n_i}}-1$. Put
$l_i=\deg(q_i(x))$. Let $C_i\in M_{l_{i-1}}(R_i)$ be the companion
matrix of $q_{i-1}(x)$. Note that $C_i$ is invertible in
$M_{l_{i-1}}(R_i)$ and has order $p_{i-1}^{n_{i-1}}$. Let
$T_i=R_i^{2l_{i-1}}$. We define the symmetric bilinear form
$b_i\colon T_i\times T_i\longrightarrow R_i$ by
$$b_i(u,v)=u\left(\begin{array}{c|c}
0&I_{l_{i-1}}\\  \hline I_{l_{i-1}}&0\end{array}\right)v^t,$$ for
all $u,v\in T_i$. We also define $f_i\in \aut(T_i)$ by
$$f_i(u)=u\left(\begin{array}{c|c}
C_i^t&0\\
\hline 0&C_i^{-1}\end{array}\right),$$ for all $u\in T_i$. One can
check that $f_i\in O(T_i,b_i)$, the orthogonal group of $b_i$. This
is a consequence of the fact that
$$\left(\begin{array}{c|c}
C_i^t&0\\
\hline 0&C_i^{-1}\end{array}\right)  \left(\begin{array}{c|c} 0&I_{l_{i-1}}\\
\hline I_{l_{i-1}}&0\end{array}\right) \left(\begin{array}{c|c}
C_i^t&0\\
\hline 0&C_i^{-1}\end{array}\right)^{t}= \left(\begin{array}{c|c} 0&I_{l_{i-1}}\\
\hline I_{l_{i-1}}&0\end{array}\right).$$

For every ring $R$, by the trivial brace $R$ will mean $(R,+,+)$.

Consider the trivial braces $T=T_1^{s_1}\times\cdots\times
T_m^{s_m}$ and $S=R_1\times\cdots\times R_m$. We define the
symmetric bi-additive map $b\colon T\times T\longrightarrow S$ by
\begin{align*}&b((u_{1,1},\dots,u_{1,s_1},\dots, u_{m,1},\dots,
u_{m,s_m}),(v_{1,1},\dots,v_{1,s_1},\dots, v_{m,1},\dots,
v_{m,s_m}))\\
&\qquad =(\sum_{j=1}^{s_1}b_1(u_{1,j},v_{1,j}),\dots
,\sum_{j=1}^{s_m}b_m(u_{m,j},v_{m,j})),
\end{align*}
for all $u_{i,j},v_{i,j}\in T_i$. We define $\alpha\colon
S\longrightarrow \aut(T)$ by $\alpha(a_1,\dots ,
a_m)=\alpha_{(a_1,\dots ,a_m)}$ and
\begin{eqnarray*}\lefteqn{\alpha_{(a_1,\dots ,a_m)}(u_{1,1},\dots,u_{1,s_1},\dots,
u_{m,1},\dots,
u_{m,s_m})}\\
&&=(f_1^{a_m}(u_{1,1}),\dots,f_1^{a_m}(u_{1,s_1}),f_2^{a_1}(u_{2,1})\dots,
f_2^{a_1}(u_{2,s_2}),\dots, f_m^{a_{m-1}}(u_{m,1}),\dots,
f_m^{a_{m-1}}(u_{m,s_m})),
\end{eqnarray*}
for all $a_i\in R_i$, and all $u_{i,j}\in T_i$. Note that, since
$f_i$ has order $p_{i-1}^{n_{i-1}}$ and $a_{i-1}\in
\Z/(p_{i-1}^{n_{i-1}})$, the map $\alpha$ is well-defined. Clearly
$\alpha$ is a group homomorphism from the group $(S,+)$ to
$\Aut(T,+,+)$. Since $f_i\in O(T_i,b_i)$, we have that
\begin{eqnarray*}\lefteqn{b(\alpha_{(a_1,\dots, a_m)}(u_{1,1},\dots,u_{1,s_1},\dots,
u_{m,1},\dots, u_{m,s_m}),\alpha_{(a_1,\dots
,a_m)}(v_{1,1},\dots,v_{1,s_1},\dots, v_{m,1},\dots,
v_{m,s_m}))}\\
&&=b((f_1^{a_m}(u_{1,1}),\dots,f_1^{a_m}(u_{1,s_1}),f_2^{a_1}(u_{2,1})\dots,
f_2^{a_1}(u_{2,s_2}),\dots, f_m^{a_{m-1}}(u_{m,1}),\dots,
f_m^{a_{m-1}}(u_{m,s_m})),\\
&&\qquad
(f_1^{a_m}(v_{1,1}),\dots,f_1^{a_m}(v_{1,s_1}),f_2^{a_1}(v_{2,1})\dots,
f_2^{a_1}(v_{2,s_2}),\dots, f_m^{a_{m-1}}(v_{m,1}),\dots,
f_m^{a_{m-1}}(v_{m,s_m}))),\\
&&=(\sum_{j=1}^{s_1}b_1(f_1^{a_m}(u_{1,j}),f_1^{a_m}(v_{1,j})),\dots
,\sum_{j=1}^{s_m}b_m(f_{m}^{a_{m-1}}(u_{m,j}),f_{m}^{a_{m-1}}(v_{m,j}))),\\
&& =(\sum_{j=1}^{s_1}b_1(u_{1,j},v_{1,j}),\dots
,\sum_{j=1}^{s_m}b_m(u_{m,j},v_{m,j})),\\
&&=b((u_{1,1},\dots,u_{1,s_1},\dots, u_{m,1},\dots,
u_{m,s_m}),(v_{1,1},\dots,v_{1,s_1},\dots, v_{m,1},\dots,
v_{m,s_m})),
\end{eqnarray*}
for all $a_i\in R_i$ and all $u_{i,j},v_{i,j}\in T_i$.

Hence, (\ref{condition}) is satisfied and we can construct the
asymmetric product $T\rtimes_{\circ}S$ of $T$ by $S$ via $\alpha$
and $b$.

\begin{lemma}\label{addSylow} The Sylow $p_i$-subgroup of the additive group of
$T\rtimes_{\circ}S$ is
$$A_i=\{0\}\times\cdots\times\{0\}\times T_i^{s_i}\times\{0\}\times\cdots\times\{0\}\times R_i\times \{0\}\times\cdots\times \{0\}.$$
Furthermore $A_i\cong (\Z/(p_i^{n_i}))^{2s_il_{i-1}+1}$.
\end{lemma}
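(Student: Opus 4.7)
First I would verify that $A_i$ is an additive subgroup of $T\rtimes_\circ S$ of the right order. The key structural observation is that the symmetric bi-additive form $b\colon T\times T\to S$ is ``block-diagonal'' in the sense that its $j$-th output coordinate depends only on the $T_j^{s_j}$-components of its inputs. Consequently, if both summands $(t,s),(t',s')$ of the asymmetric sum formula $(t,s)+(t',s')=(t+t',\,s+s'+b(t,t'))$ are supported only in the $i$-th $T_i^{s_i}$ block and the $i$-th $R_i$ component, then so is their sum; using the inverse formula $-(t,s)=(-t,\,-s-b(-t,t))$ recorded right after the definition of the asymmetric product, the same holds for inverses. A direct count then gives $|A_i|=|T_i|^{s_i}\cdot |R_i|=p_i^{n_i(2s_il_{i-1}+1)}$, and since each factor $|T_j^{s_j}|\cdot |R_j|$ is a power of the prime $p_j$ and the $p_j$ are pairwise distinct, this is exactly the $p_i$-part of $|T\rtimes_\circ S|$. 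Uniqueness of the Sylow subgroup of an abelian group then forces $A_i$ to be the Sylow $p_i$-subgroup.

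To identify the isomorphism type, I would note that, after suppressing the zero coordinates, the induced addition on $A_i$ is precisely the one of $G_{b'_i}(T_i^{s_i},R_i)$ introduced before Lemma~\ref{additive}, where $b'_i\colon T_i^{s_i}\times T_i^{s_i}\to R_i$ is the coordinate-wise sum of $s_i$ copies of $b_i$. For odd $p_i$, Lemma~\ref{additive} immediately yields $G_{b'_i}(T_i^{s_i},R_i)\cong T_i^{s_i}\times R_i\cong (\Z/(p_i^{n_i}))^{2s_il_{i-1}+1}$. The genuine obstacle is the prime $p_i=2$, which Lemma~\ref{additive} does not cover. To handle all primes uniformly I would exploit the hyperbolic shape of the Gram matrix of $b_i$: writing $u=(u^{(1)},u^{(2)})\in R_i^{l_{i-1}}\times R_i^{l_{i-1}}$, the explicit quadratic refinement $c(u)=u^{(1)}(u^{(2)})^{t}$ satisfies, by a one-line expansion, $c(u+v)-c(u)-c(v)=b_i(u,v)$, so $b_i$ is a symmetric additive coboundary on $T_i$. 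Summing $c$ over the $s_i$ copies gives $c'\colon T_i^{s_i}\to R_i$ with the analogous property for $b'_i$, and then $\psi(u,r)=(u,\,r-c'(u))$ is a group isomorphism from $G_{b'_i}(T_i^{s_i},R_i)$ onto the ordinary direct product $T_i^{s_i}\times R_i\cong (\Z/(p_i^{n_i}))^{2s_il_{i-1}+1}$, which completes the proof.
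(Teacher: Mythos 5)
Your proof is correct and follows essentially the same route as the paper: the subgroup-plus-order-count argument for the first claim is identical, and your map $\psi(u,r)=(u,\,r-c'(u))$ built from the quadratic refinement $c(u)=u^{(1)}(u^{(2)})^{t}$ is exactly the paper's explicit isomorphism $\varphi$, whose last coordinate is $a_i-\sum_{j}\sum_{k}a_{i,j,k}a_{i,j,k+l_{i-1}}$. Your coboundary identity $c(u+v)-c(u)-c(v)=b_i(u,v)$ is just a more conceptual packaging of the direct verification the paper carries out, and it correctly handles the case $p_i=2$ that Lemma~\ref{additive} does not cover.
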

\begin{proof}   Note that
$|T\rtimes_{\circ}S|=\prod_{j\in\Z/(m)}p_j^{n_j(2s_jl_{j-1}+1)}$
and $|A_i|=p_i^{n_i(2s_il_{i-1}+1)}$. Thus to prove that $A_i$ is
the Sylow $p_i$-subgroup of the additive subgroup of
$T\rtimes_{\circ}S$, it is enough to show that $A_i$ is an
additive subgroup of $T\rtimes_{\circ}S$. But this is obvious from
the definition of the addition in $T\rtimes_{\circ}S$. Therefore
the first part of the result follows.

Let $u,v\in A_i$. Then
$$u=((0,\dots, 0,u_{i,1},\dots
,u_{i,s_i},0,\dots,0),(0,\dots ,0,a_i,0,\dots ,0))\in A_i$$ and
$$v=((0,\dots, 0,v_{i,1},\dots ,v_{i,s_i},0,\dots,0),
(0,\dots ,0,c_i,0,\dots ,0))\in A_i,$$ for some $u_{i,k},v_{i,k}\in
T_i$ and $a_i,c_i\in R_i$. Since $T_i=R_i^{2l_{i-1}}$, there exist
$a_{i,k,j},c_{i,k,j}\in R_i$ such that
$$u_{i,k}=(a_{i,k,1},\dots ,a_{i,k,2l_{i-1}})\quad\mbox{and}\quad v_{i,k}=(c_{i,k,1},\dots ,c_{i,k,2l_{i-1}}).$$
We define $\varphi\colon A_i\longrightarrow
(\Z/(p_i^{n_i}))^{2s_il_{i-1}+1}$ by
\begin{align*}\varphi(u)&=(a_{i,1,1},\dots, a_{i,1,2l_{i-1}},a_{i,2,1},\dots ,a_{i,2,2l_{i-1}},\dots ,a_{i,s_i,1},\dots
,a_{i,s_i,2l_{i-1}},\\
&\quad
a_i-\sum_{j=1}^{s_i}\sum_{k=1}^{l_{i-1}}a_{i,j,k}a_{i,j,k+l_{i-1}}).
\end{align*}
Note that
\begin{align*}
u+v&=((0,\dots, 0,u_{i,1}+v_{i,1},\dots
,u_{i,s_i}+v_{i,s_i},0,\dots,0),(0,\dots,
0,a_i+c_i+\sum_{j=1}^{s_i}b_i(u_{i,j},v_{i,j}),0\dots,0))\\
&=((0,\dots, 0,u_{i,1}+v_{i,1},\dots
,u_{i,s_i}+v_{i,s_i},0,\dots,0),\\
&\qquad (0,\dots,
0,a_i+c_i+\sum_{j=1}^{s_i}\sum_{k=1}^{l_{i-1}}(a_{i,j,k}c_{i,j,k+l_{i-1}}+a_{i,j,k+l_{i-1}}c_{i,j,k}),0\dots,0))
\end{align*}
and then
\begin{align*}
\varphi(u+v)&=(a_{i,1,1}+c_{i,1,1},\dots,
a_{i,1,2l_{i-1}}+c_{i,1,2l_{i-1}},a_{i,2,1}+c_{i,2,1},\dots
,a_{i,2,2l_{i-1}}+c_{i,2,2l_{i-1}},\\
&\quad \dots ,a_{i,s_i,1}+c_{i,s_i,1},\dots
,a_{i,s_i,2l_{i-1}}+c_{i,s_i,2l_{i-1}},\\
&\quad
a_i+c_i+\sum_{j=1}^{s_i}\sum_{k=1}^{l_{i-1}}(a_{i,j,k}c_{i,j,k+l_{i-1}}+a_{i,j,k+l_{i-1}}c_{i,j,k})\\
&\qquad-\sum_{j=1}^{s_i}\sum_{k=1}^{l_{i-1}}(a_{i,j,k}+c_{i,j,k})(a_{i,j,k+l_{i-1}}+c_{i,j,k+l_{i-1}}))\\
&=(a_{i,1,1}+c_{i,1,1},\dots,
a_{i,1,2l_{i-1}}+c_{i,1,2l_{i-1}},a_{i,2,1}+c_{i,2,1},\dots
,a_{i,2,2l_{i-1}}+c_{i,2,2l_{i-1}},\\
&\quad \dots ,a_{i,s_i,1}+c_{i,s_i,1},\dots
,a_{i,s_i,2l_{i-1}}+c_{i,s_i,2l_{i-1}},\\
&\quad
a_i+c_i-\sum_{j=1}^{s_i}\sum_{k=1}^{l_{i-1}}(a_{i,j,k}a_{i,j,k+l_{i-1}}+c_{i,j,k+l_{i-1}}c_{i,j,k})\\
&=\varphi(u)+\varphi(v).
\end{align*}
Hence $\varphi$ is a homomorphism of groups. Clearly $\varphi$ is
bijective. Therefore the result follows.
\end{proof}

\begin{remark}\label{multSylow}
{\rm Note that the multiplicative group of the left brace
$T\rtimes_{\circ}S$ constructed above is the semidirect product
$T\rtimes_{\alpha}S$ of $T$ and $S$ via $\alpha$. Hence, it is
metabelian. Since $A_i$ is the additive Sylow $p_i$-subgroup of the
left brace $T\rtimes_{\circ}S$, it is clear that $A_i$ is a left
ideal. Thus the multiplicative group of $A_i$ is a multiplicative
Sylow $p_i$-subgroup of $T\rtimes_{\alpha}S$. By the definition of
$\alpha$, it is easy to see that the multiplicative group of $A_i$
is isomorphic to the direct product $T_{i}^{s_i}\times
R_i=(\Z/(p_i^{n_i}))^{2s_il_{i-1}+1}$. Hence the Sylow subgroups of
the multiplicative group of the left brace $T\rtimes_{\circ}S$ are
abelian, i.e. $T\rtimes_{\circ}S$ is an A-group.}
\end{remark}

\begin{proposition}\label{main}
With the above notation, the left brace $T\rtimes_{\circ}S$ is
simple.
\end{proposition}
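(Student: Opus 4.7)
\medskip
\noindent\textbf{Proof plan.} Take a non-zero ideal $I$ of $B = T \rtimes_\circ S$; the goal is to show $I = B$. The main tool is Lemma~\ref{ideal}: $(\lambda_b - \id)(a) \in I$ for all $b \in I$ and $a \in B$. Since $T$ and $S$ are trivial braces, the $\lambda$-formula simplifies, and a direct computation shows the output of $(\lambda_b - \id)$ is controlled entirely by the two levers at our disposal: the action $\alpha$ of $S$ on $T$, which implements the cyclic index shift $i \mapsto i+1$, and the non-degenerate hyperbolic forms $b_i$. The overall strategy is to exploit this to propagate membership in $I$ around the cycle $\Z/(m)$ of additive Sylow blocks.

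Because $(B,+) = \bigoplus_{i \in \Z/(m)} A_i$ is a direct sum of Sylow subgroups of pairwise coprime orders (Lemma~\ref{addSylow}), any additive subgroup splits accordingly as $I = \bigoplus_i (I \cap A_i)$, so some $I \cap A_{i_0}$ is non-zero. The key implication is $I \cap A_i \neq 0 \Rightarrow A_{i+1} \subseteq I$; iterating it $m$ times around $\Z/(m)$ then yields $I = B$.

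To prove this implication, first reduce to having a non-zero element in the $R_i$-slot of $I$. If a non-zero element of $I \cap A_i$ has a non-zero $T_i^{s_i}$-component $u_{i,j_0}$, then plugging into $(\lambda_b - \id)$ an input $(t_2, 0)$ concentrated in the $j_0$-slot of $T_i^{s_i}$ kills the first coordinate (because $\alpha_{s_1}$ acts trivially on $T_i^{s_i}$) and, after cancellation, leaves $-b_i(v, u_{i,j_0})$ in the $R_i$-slot of the second coordinate; non-degeneracy of $b_i$ gives a non-zero element of $I$ in the $R_i$-slot. Starting from such $(0, a_i) \in I$ with $a_i \neq 0$, plugging inputs from $T_{i+1}^{s_{i+1}}$ into $(\lambda_b - \id)$ produces elements of $I$ whose $T_{i+1}^{s_{i+1}}$-component equals $((f_{i+1}^{a_i} - \id)(t_{2,i+1,j}))_j$. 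Once $f_{i+1}^{a_i} - \id$ is shown to be an automorphism of $T_{i+1}$, every $w \in T_{i+1}^{s_{i+1}}$ arises as such a component, so $(w, Y(w)) \in I$ for some $Y(w) \in R_{i+1}$. Applying the earlier reduction with index $i+1$ to any such element with non-zero $w$ extracts the full $R_{i+1}$-slot inside $I$, and then $(w, Y(w)) + (0, -Y(w)) = (w, 0)$ yields all of $T_{i+1}^{s_{i+1}}$ in $I$, hence $A_{i+1} \subseteq I$.

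The principal obstacle is verifying that $f_{i+1}^{a_i} - \id$ is an automorphism of $T_{i+1}$ for every non-zero $a_i \in R_i$. Since $f_{i+1}$ acts block-diagonally via $C_{i+1}$ and $C_{i+1}^{-1}$, and $C_{i+1}$ is multiplication by $x$ on $R_{i+1}[x]/(q_i(x))$ with $q_i(x) = \Phi_{p_i^{n_i}}(x)$, the question reduces to showing that $x^{a_i} - 1$ is a unit in that quotient ring. A resultant computation yields $\operatorname{Res}(x^{a_i} - 1, \Phi_{p_i^{n_i}}(x)) = p_i^{p_i^{e}}$, where $e$ is the $p_i$-adic valuation of $a_i$ (and $e < n_i$), and this is a unit in $R_{i+1}$ precisely because $\gcd(p_i, p_{i+1}) = 1$. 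This number-theoretic step, tying invertibility to the distinctness of the primes indexing consecutive Sylow blocks, is the technical crux of the proof.
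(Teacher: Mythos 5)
Your proposal is correct and follows the same skeleton as the paper's proof: split $(B,+)$ into the additive Sylow blocks $A_i$, use Lemma~\ref{ideal} together with the non-degeneracy of the hyperbolic forms $b_i$ to trade a non-zero $T_i^{s_i}$-component of an element of $I$ for a non-zero element in the $R_i$-slot, then use invertibility of $f_{i+1}^{a}-\id$ to sweep out all of $A_{i+1}$ and propagate around the cycle $\Z/(m)$. The one genuine point of divergence is the invertibility crux. The paper first descends to the specific element $p_i^{n_i-1}$ of the $R_i$-slot (every non-trivial subgroup of $\Z/(p_i^{n_i})$ contains it) and only needs $f_{i+1}^{p_i^{n_i-1}}-\id$ to be invertible, which it gets from the elementary identity $q_i(x)=p_i+(x^{p_i^{n_i-1}}-1)\sum_{j=1}^{p_i-1}jx^{(p_i-j-1)p_i^{n_i-1}}$: substituting $C_{i+1}$ exhibits $C_{i+1}^{p_i^{n_i-1}}-I$ as a divisor of the unit $-p_iI_{l_i}$. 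You instead prove the stronger statement that $f_{i+1}^{a}-\id$ is invertible for \emph{every} non-zero $a\in R_i$, via the resultant $\operatorname{Res}(x^{a}-1,\Phi_{p_i^{n_i}}(x))=\pm p_i^{p_i^{e}}$; this makes the preliminary descent to $p_i^{n_i-1}$ unnecessary, at the cost of importing a cyclotomic computation (and you should say explicitly that invertibility of $C_{i+1}^{a}-I$ over $R_{i+1}$ follows because its determinant is the reduction of this integer resultant, a unit since $p_i\neq p_{i+1}$). One small point to make explicit: your ``earlier reduction'' produces a non-zero, not necessarily generating, element of the $R_{i+1}$-slot, so to get the \emph{full} slot you must realize a unit as a value $b_{i+1}(v,w_{j_0})$; this is immediate for the given hyperbolic form (pair a standard basis vector with its dual partner, as the paper does with $v'_{2,1}=(1,\dots,1)$), but it does not follow from non-degeneracy alone over $\Z/(p^{n})$. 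With that said, both routes are valid; the paper's is more self-contained, yours shortens the ideal-chasing.
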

\begin{proof}
Let $I$ be a non-zero ideal of $T\rtimes_{\circ}S$. Let
$((u_{1,1},\dots,u_{1,s_1},\dots, u_{m,1},\dots,
u_{m,s_m}),(a_1,\dots ,a_m))\in I$ be a non-zero element. By
symmetry, we may assume that $(u_{1,1},\dots,u_{1,s_1},a_1)\neq
(0,\dots ,0)$. Since $p_1,\dots, p_m$ are different prime numbers,
we also may assume that $(u_{i,1},\dots,u_{i,s_1},a_i)=(0,\dots
,0)$, for all $i\neq 1$. Thus $((u_{1,1},\dots,u_{1,s_1},0\dots
0),(a_1,0,\dots ,0))\in I$.

We shall prove that $((0,\dots ,0),(p_1^{n_1-1},0,\dots ,0))\in I$.

Note that if $(u_{1,1},\dots,u_{1,s_1})=(0,\dots ,0)$, then $a_1\neq
0$ and, in this case, $((0,\dots ,0),(p_1^{n_1-1},0,\dots ,0))\in
I$, because every nontrivial additive subgroup of
$\mathbb{Z}/(p_i^{n_i})$ contains $p_i^{n_i-1}$.

Suppose that $(u_{1,1},\dots,u_{1,s_1})\neq (0,\dots ,0)$. In this
case, since $b_1$ is non-singular, there exists
$(v_{1,1},\dots,v_{1,s_1})\in T_1^{s_1}$ such that
$\sum_{j=1}^{s_1}b_1(u_{1,j},v_{1,j})\neq 0$. Now, since $I$ is
closed by the lambda maps, we get
\begin{eqnarray*}\lefteqn{\lambda_{((v_{1,1},\dots,v_{1,s_1},0,\dots ,0),(0,\dots
,0))}((u_{1,1},\dots,u_{1,s_1},0,\dots ,0), (a_1,0,\dots
,0))}\\
&&=((u_{1,1},\dots,u_{1,s_1},0,\dots
,0),(a_1-\sum_{j=1}^{s_1}b_1(u_{1,j},v_{1,j}),0, \dots ,0))\in
I.\end{eqnarray*}
Hence
\begin{eqnarray*}\lefteqn{((u_{1,1},\dots,u_{1,s_1},0,\dots ,0),(a_1,0,\dots
,0))}\\
&&-((u_{1,1},\dots,u_{1,s_1},0,\dots
,0),(a_1-\sum_{j=1}^{s_1}b_1(u_{1,j},v_{1,j}),0,\dots
,0))\\
&=&((0,\dots ,0),(\sum_{j=1}^{s_1}b_1(u_{1,j},v_{1,j}),0,\dots
,0))\in I,\end{eqnarray*} and thus also $((0,\dots
,0),(p_1^{n_1-1},0,\dots ,0))\in I$ in this case.

Now, by Lemma~\ref{ideal},  we have that
\begin{eqnarray} \label{inI} \lefteqn{(\lambda_{((0,\dots ,0),(p_1^{n_1-1},0,\dots ,0))}-\id)((0\dots
,0,v_{2,1},\dots ,v_{2,s_2},0,\dots ,0),(0,\dots ,0))}\nonumber\\
&&=((0,\dots ,0,(f_2^{p_1^{n_1-1}}-\id)(v_{2,1}),\dots
,(f_2^{p_1^{n_1-1}}-\id)(v_{2,s_2}),0,\dots ,0),\nonumber\\
&&\qquad
(0,-\sum_{j=1}^{s_2}b_2(v_{2,j},(f_2^{p_1^{n_1-1}}-\id)(v_{2,j})),0,\dots
,0))\in I,
\end{eqnarray}
for every $(v_{2,1},\dots ,v_{2,s_2})\in T_2^{s_2}$. Note  that
$$q_1(x)=p_1+(x^{p_1^{n_1-1}}-1)\sum_{j=1}^{p_1-1}jx^{(p_1-j-1)p_1^{n_1-1}}.$$
Hence
$$0=q_1(C_2)=p_1I_{l_2}+(C_2^{p_1^{n_1-1}}-I_{l_2})\sum_{j=1}^{p_1-1}jC_2^{(p_1-j-1)p_1^{n_1-1}}.$$
Since $p_1\in R_2$ is invertible,  this implies that
$f_2^{p_1^{n_1-1}}-\id$ is invertible. Hence, by the definition of
$b_2$, there exist $v_{2,j},v'_{2,j}\in T_2$ such that
\begin{equation}\label{invertible}\sum_{j=1}^{s_2}b_2((f_2^{p_1^{n_1-1}}-\id)(v_{2,j}),v'_{2,j})=1.\end{equation}
Indeed, there exists $v_{2,1}\in T_2$ such that
$(f_2^{p_1^{n_1-1}}-\id)(v_{2,1})=(1,0,\dots ,0)$. Let
$v'_{2,1}=(1,\dots ,1)$ and $v'_{2,j}=(0,\dots ,0)$, for all
$1<j\leq s_2$. Then $b_2((1,0,\dots ,0),(1,\dots ,1))=1$ and thus
we get (\ref{invertible}). By (\ref{inI})
\begin{eqnarray}\label{vinI}\lefteqn{((0,\dots
,0,(f_2^{p_1^{n_1-1}}-\id)(v_{2,1}),\dots
,(f_2^{p_1^{n_1-1}}-\id)(v_{2,s_2}),0,\dots
,0),}\nonumber\\
&&\qquad
(0,-\sum_{j=1}^{s_2}b_2(v_{2,j},(f_2^{p_1^{n_1-1}}-\id)(v_{2,j})),0,\dots
,0))\in I.
\end{eqnarray}
Then, since $I$ is closed by the lambda maps, applying
$\lambda_{((0,\dots ,0,v'_{2,1},\dots ,v'_{2,s_2},0,\dots
,0),(0,\dots ,0))}$ to the element in (\ref{vinI}), we have that
\begin{eqnarray*}\lefteqn{\lambda_{((0,\dots ,0,v'_{2,1},\dots ,v'_{2,s_2},0,\dots
,0),(0,\dots ,0))}((0,\dots
,0,(f_2^{p_1^{n_1-1}}-\id)(v_{2,1}),\dots
,(f_2^{p_1^{n_1-1}}-\id)(v_{2,s_2}),0,\dots
,0),}\\
&&\qquad
(0,-\sum_{j=1}^{s_2}b_2(v_{2,j},(f_2^{p_1^{n_1-1}}-\id)(v_{2,j})),0,\dots
,0))\\
&&=((0,\dots ,0,(f_2^{p_1^{n_1-1}}-\id)(v_{2,1}),\dots
,(f_2^{p_1^{n_1-1}}-\id)(v_{2,s_2}),0,\dots ,0),\\
&&\qquad
(0,-\sum_{j=1}^{s_2}b_2(v_{2,j},(f_2^{p_1^{n_1-1}}-\id)(v_{2,j}))-\sum_{j=1}^{s_2}b_2((f_2^{p_1^{n_1-1}}-\id)(v_{2,j}),v'_{2,j}),0,\dots
,0))\\
&&=((0,\dots ,0,(f_2^{p_1^{n_1-1}}-\id)(v_{2,1}),\dots
,(f_2^{p_1^{n_1-1}}-\id)(v_{2,s_2}),0,\dots ,0),\\
&&\qquad
(0,-\sum_{j=1}^{s_2}b_2(v_{2,j},(f_2^{p_1^{n_1-1}}-\id)(v_{2,j}))-1,0,\dots
,0))\in I,
\end{eqnarray*}
where the last equality follows by (\ref{invertible}).  Hence, this
together with (\ref{vinI}) yields
\begin{eqnarray}\label{1inI}\lefteqn{((0,\dots ,0,(f_2^{p_1^{n_1-1}}-\id)(v_{2,1}),\dots ,(f_2^{p_1^{n_1-1}}-\id)(v_{2,s_2}),0,\dots
,0),}\nonumber\\
&&\qquad (0,-\sum_{j=1}^{s_2}b_2(v_{2,j},(f_2^{p_1^{n_1-1}}-\id)(v_{2,j})),0\dots ,0))\nonumber\\
&&-((0,\dots ,0,(f_2^{p_1^{n_1-1}}-\id)(v_{2,1}),\dots
,(f_2^{p_1^{n_1-1}}-\id)(v_{2,s_2}),0,\dots
,0),\nonumber\\
&&\qquad
(0,-\sum_{j=1}^{s_2}b_2(v_{2,j},(f_2^{p_1^{n_1-1}}-\id)(v_{2,j}))-1,0\dots
,0))\nonumber\\
&=&((0,\dots ,0),(0,1,0,\dots ,0))\in I.
\end{eqnarray}
Now we have that
$$((0,\dots ,0),(0,\sum_{j=1}^{s_2}b_2(w_{2,j},(f_2^{p_1^{n_1-1}}-\id)(w_{2,j})),0,\dots ,0))\in I,$$
for all $(w_{2,1},\dots ,w_{2,s_2})\in T_2^{s_2}$, and by
(\ref{inI}),  we also have that
\begin{eqnarray*}\lefteqn{((0,\dots ,0,(f_2^{p_1^{n_1-1}}-\id)(w_{2,1}),\dots
,(f_2^{p_1^{n_1-1}}-\id)(w_{2,s_2}),0,\dots ,0),}\\
&&\qquad (0,-\sum_{j=1}^{s_2}b_2(w_{2,j},(f_2^{p_1^{n_1-1}}-\id)(w_{2,j})),0,\dots ,0))\\
&&\quad+((0,\dots
,0),(0,\sum_{j=1}^{s_2}b_2(w_{2,j},(f_2^{p_1^{n_1-1}}-\id)(w_{2,j})),0,\dots
,0))\in I,
\end{eqnarray*}
for all $(w_{2,1},\dots ,w_{2,s_2})\in T_2^{s_2}$. Thus
\begin{eqnarray*}\lefteqn{((0,\dots ,0,(f_2^{p_1^{n_1-1}}-\id)(w_{2,1}),\dots
,(f_2^{p_1^{n_1-1}}-\id)(w_{2,s_2}),0,\dots ,0),}\\
&&\qquad (0,-\sum_{j=1}^{s_2}b_2(w_{2,j},(f_2^{p_1^{n_1-1}}-\id)(w_{2,j})),0,\dots ,0))\\
&&\quad+((0,\dots
,0),(0,\sum_{j=1}^{s_2}b_2(w_{2,j},(f_2^{p_1^{n_1-1}}-\id)(w_{2,j})),0,\dots
,0))\\
&&=((0,\dots ,0,(f_2^{p_1^{n_1-1}}-\id)(w_{2,1}),\dots
,(f_2^{p_1^{n_1-1}}-\id)(w_{2,s_2}),0,\dots ,0),(0,\dots,0))\in I,
\end{eqnarray*}
for all $(w_{2,1},\dots ,w_{2,s_2})\in T_2^{s_2}$. Since
$f_2^{p_1^{n_1-1}}-\id$ is invertible, this yields that
$$((0,\dots
,0,t_{2,1},\dots ,t_{2,s_2},0,\dots ,0),(0,\dots ,0))\in I,$$ for
all $(t_{2,1},\dots ,t_{2,s_2})\in T_2^{s_2}$. Since,  by
(\ref{1inI}), $((0,\dots ,0),(0,1,0,\dots ,0))\in I$, we have that
$$((0,\dots
,0,t_{2,1},\dots ,t_{2,s_2},0,\dots ,0),(0,d_2,0\dots ,0))\in I,$$
for all $(t_{2,1},\dots ,t_{2,s_2},d_2)\in T_2^{s_2}\times R_2$.
Now, by a similar argument, one can prove that
$$((0,\dots ,0,t_{i,1},\dots
,t_{i,s_i},0,\dots ,0)(0,\dots ,0,d_i,0,\dots ,0))\in I,$$ for all
$(t_{i,1},\dots ,t_{i,s_i},d_i)\in T_i^{s_i}\times R_i$, for all
$i\in \Z/(m)$, and the result follows.
\end{proof}
Hence, as an easy consequence of Proposition~\ref{main},
Lemma~\ref{addSylow} and Remark~\ref{multSylow},  we get the desired
result.
\begin{theorem}
Let $A$ be a finite abelian group. Then $A$ is a subgroup of the
additive group of a finite simple left brace $B$ with metabelian
multiplicative group with abelian Sylow subgroups.
\end{theorem}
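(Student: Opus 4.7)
The plan is to reduce the theorem to an application of Proposition~\ref{main}, Lemma~\ref{addSylow} and Remark~\ref{multSylow} by choosing the parameters of the construction in Section~3 so that the additive group of $B=T\rtimes_\circ S$ is large enough to contain $A$ as a subgroup.

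First, I would decompose $A$ into its Sylow components $A\cong\bigoplus_q A_q$, and further decompose each Sylow part as $A_q\cong\bigoplus_{j=1}^{k_q}\Z/(q^{e_{q,j}})$. Let $n_q:=\max_j e_{q,j}$. Since $\Z/(q^{e_{q,j}})$ embeds into $\Z/(q^{n_q})$ (for example as the subgroup generated by $q^{n_q-e_{q,j}}$), this yields an embedding $A_q\hookrightarrow (\Z/(q^{n_q}))^{k_q}$.

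Next, I would specify the parameters. Let $q_1,\dots,q_r$ be the distinct primes dividing $|A|$. If $r\geq 2$, set $m:=r$ and $p_i:=q_i$; if $r\leq 1$, set $m:=2$ and introduce an auxiliary prime $p_2$ (distinct from $q_1$ when $r=1$) so that the hypothesis $m>1$ holds. For each $i\in\Z/(m)$, set $n_i:=n_{p_i}$ when $p_i$ divides $|A|$ and $n_i:=1$ otherwise, and choose a positive integer $s_i$ large enough so that $2s_il_{i-1}+1\geq k_{p_i}$ (with $k_{p_i}:=0$ when $p_i\nmid |A|$); this is possible since $l_{i-1}=p_{i-1}^{n_{i-1}}-p_{i-1}^{n_{i-1}-1}\geq 1$. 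With these choices, form the brace $B=T\rtimes_\circ S$ as in Section~3. By Proposition~\ref{main}, $B$ is simple, and by Remark~\ref{multSylow}, its multiplicative group is metabelian with abelian Sylow subgroups. By Lemma~\ref{addSylow}, the additive Sylow $p_i$-subgroup of $B$ is isomorphic to $(\Z/(p_i^{n_i}))^{2s_il_{i-1}+1}$, which contains $(\Z/(p_i^{n_i}))^{k_{p_i}}$ and therefore $A_{p_i}$ by the first step. Since $(B,+)$ is the internal direct sum of its Sylow subgroups, taking direct sums yields the desired embedding $A\hookrightarrow (B,+)$.

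The main obstacle is reconciling the rigid homogeneous shape $(\Z/(p_i^{n_i}))^{2s_il_{i-1}+1}$ of each additive Sylow subgroup produced by the construction with the fact that the Sylow piece $A_{p_i}$ of $A$ may have cyclic summands of differing orders. This is resolved by the standard observation that a finite abelian $p$-group of exponent $p^n$ embeds into a sufficiently large direct power of $\Z/(p^n)$, combined with the freedom to enlarge $s_i$. A mild technicality is the constraint $m>1$, which forces the addition of a spurious prime when $A$ is a $p$-group (or trivial), but this has no effect on the embedding.
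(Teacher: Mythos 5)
Your proposal is correct and follows exactly the route the paper intends: the paper itself derives the theorem as "an easy consequence of Proposition~\ref{main}, Lemma~\ref{addSylow} and Remark~\ref{multSylow}" without spelling out the details, and your argument supplies precisely those details (Sylow decomposition of $A$, embedding each $p$-part into a power of $\Z/(p^{n_p})$, choosing $m$, $p_i$, $n_i$, $s_i$ accordingly, and handling the $m>1$ constraint with an auxiliary prime). No gaps; your write-up is in fact more complete than the paper's.
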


\vspace{30pt}
 \noindent \begin{tabular}{llllllll}
  F. Ced\'o && E. Jespers\\
 Departament de Matem\`atiques &&  Department of Mathematics \\
 Universitat Aut\`onoma de Barcelona &&  Vrije Universiteit Brussel \\
08193 Bellaterra (Barcelona), Spain    && Pleinlaan 2, 1050 Brussel, Belgium \\
 cedo@mat.uab.cat && Eric.Jespers@vub.be \\ \\
 J. Okni\'{n}ski && \\ Institute of
Mathematics &&
\\  Warsaw University &&\\
 Banacha 2, 02-097 Warsaw, Poland &&\\
 okninski@mimuw.edu.pl &&
\end{tabular}

\end{document}